\documentclass[a4paper,12pt]{amsart}

\usepackage{float}
\usepackage{euscript,eufrak,verbatim}
\usepackage{graphicx}
\usepackage{amscd,pifont}
\usepackage[usenames]{color}
\usepackage[colorlinks,linkcolor=red,anchorcolor=blue,citecolor=blue]{hyperref}
\usepackage{bbm}

\numberwithin{equation}{section}

\newtheorem{thm}{Theorem}[section]
\newtheorem{prop}[thm]{Proposition}
\newtheorem{lem}[thm]{Lemma}


\begin{document}
	
	\title{Equivalent definitions of oscillating sequences of higher orders
	}
	
	
	
	
	\author
	{Ruxi Shi}
	\address
	{LAMFA, UMR 7352 CNRS, Universit\'e de Picardie,
		33 rue Saint Leu, 80039 Amiens, France}
	\email{ruxi.shi@u-picardie.fr}
	
	\thanks{}
	\maketitle
	



\begin{abstract}
An oscillating sequence of order $d$ is defined by the linearly disjointness from all $\{e^{2\pi i P(n)} \}_{n=1}^{\infty}$ for all real polynomials $P$ of degree smaller or equal to $d$. A fully oscillating sequence is defined to be an oscillating sequence of all orders. In this paper, we give several equivalent definitions of such sequences in terms of their disjointness from different dynamical systems on tori. 
\end{abstract}

\section{Introduction}

 Oscillating sequences of higher orders and fully oscillating sequences were defined in \cite{f} where it was proved that fully oscillating sequences are orthogonal to all multiple ergodic realizations of topological dynamics of quasi-discrete spectrum in the sense of Hahn-Parry. In \cite{f2}, this orthogonality was proved for all affine maps of zero entropy on any compact abelian group.   
The oscillation of order $1$ was earlier considered in \cite{fj} where Sarnak's conjecture was proved true for a class of dynamics even when the M\"{o}bius function is replaced by an oscillating sequence of order $1$.
In this paper, we give a full discussion on the oscillating sequences of higher order with respect to the disjointness of such a sequence from different dynamical systems
on tori. 

Let us first recall that a sequence of complex numbers $(c_n)$ is said to be \textit{oscillating of order $d$} ($d\ge 1$ being an integer) if 
$$
\lim\limits_{N\to \infty}\frac{1}{N}\sum_{n=0}^{N-1} c_n e^{2\pi i P(n)}=0
$$
for any $P\in \mathbb{R}_d[t]$ where $\mathbb{R}_d[t]$ is the space of real polynomials of degree smaller than or equal to $d$; it is said to be \textit{fully oscillating} if it is oscillating of order $d$ for every integer $d\ge 1$. 



By a dynamical system, we mean a pair $(X,T)$ where $X$ is a compact metric space and $T:X\to X$ is a continuous map. 
Following \cite{s}, we say that a sequence $(c_n)$ is \textit{linearly disjoint} from a dynamical system $(X,T)$ if 
\begin{equation}\label{ort}
\lim\limits_{N\to \infty} \frac{1}{N}\sum_{n=0}^{N-1}c_n f(T^nx)=0,
\end{equation}
for any $x\in X$ and any $f\in C(X)$ where $C(X)$ is the space of continuous functions on $X$. The sequence $(f(T^nx))$ is sometimes called an \textit{observable sequence}, or a \textit{realization} of the dynamical system $(X, T)$.
Usually, (\ref{ort}) is referred to as the orthogonality of the sequence  $(c_n)$ to the realization $(f(T^n x))$.   
Sequences of the form $f_1(T^nx) f_2(T^{2n})\cdots f_\ell(T^{\ell n}x)$ with $f_1, \cdots, f_\ell \in C(X)$ and $x\in X$ are called \textit{multiple ergodic realizations.}   Their orthogonality to fully oscillating sequences was studied in \cite{f,f2}.

An affine map $T$ on a compact abelian group $X$ is of the form 
$$
Tx=Ax+b,
$$
where $b\in X$ and $A$ is an automorphism on $X$. The automorphism $A$ is also called the \textit{section of an automorphism} of the affine $T$.

As observed  in \cite{ls}, an automorphism of zero entropy on $\mathbb{T}^d\times F$, where $\mathbb{T}^d$ is a $d$-dimensional torus and $F$ is a finite abelian group, has the form
$$
T(x,y)=(Ax+By,Cy)
$$
with $A$ an automorphism on $\mathbb{T}^{d}$ of zero entropy, $B$ a morphism from $F$ to $\mathbb{T}^{d}$ and $C$ an automorphism on $F$. Such automorphism is said to be \textit{special} if  
the morphism $N: =A-I$ is nilpotent of order $d$ (i.e. $N^d=0$ but $N^{d-1}\not=0$) and meanwhile the affine map $T$ is also called \textit{special}.

Our main result is as follows. 




\begin{thm}\label{main1}
	A sequence $(c_n)$ of complex numbers  is oscillating of order $d$ if and only if it is linearly disjoint from one of the following dynamical systems or one of the following classes of dynamical systems.
	\begin{itemize}
		\item[(1)]  All automorphisms of zero entropy on $\mathbb{T}^{d+1}\times F$ for any finite abelian group $F$.
		\item[(2)] A special automorphism of zero entropy on $\mathbb{T}^{d+1}\times F$, where $F$ is any fixed finite abelian group $F$.
		\item[(3)] All affine maps of zero entropy on $\mathbb{T}^{d}\times F$ for any finite abelian group $F$.
		\item[(4)] All special affine maps of zero entropy on $\mathbb{T}^{d}\times F$, which share  the same section of an automorphism, where $F$ is any fixed finite abelian group.
	\end{itemize}
\end{thm}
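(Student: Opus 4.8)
The plan is to establish all four equivalences by running the two cycles (oscillating of order $d$) $\Rightarrow$ (1) $\Rightarrow$ (2) $\Rightarrow$ (oscillating of order $d$) and (oscillating of order $d$) $\Rightarrow$ (3) $\Rightarrow$ (4) $\Rightarrow$ (oscillating of order $d$). The implications (1) $\Rightarrow$ (2) and (3) $\Rightarrow$ (4) are immediate: a special automorphism of zero entropy on $\mathbb{T}^{d+1}\times F$ is in particular an automorphism of zero entropy on $\mathbb{T}^{d+1}\times F$, and the special affine maps of zero entropy on $\mathbb{T}^{d}\times F$ sharing a fixed section lie inside the class of all affine maps of zero entropy on $\mathbb{T}^{d}\times F$. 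So the two genuine tasks are the forward implications (oscillation $\Rightarrow$ disjointness from the large classes (1) and (3)) and the converse implications (disjointness from the small system(s) of (2) and (4) $\Rightarrow$ oscillation).

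For the forward direction I would first reduce, via Stone--Weierstrass, to checking \eqref{ort} when $f$ is a character of the ambient compact abelian group. Write a zero-entropy automorphism of $\mathbb{T}^{d+1}\times F$ as $T(x,y)=(Ax+By,Cy)$ using the structure recalled from \cite{ls}; then $T^{n}(x,y)=(A^{n}x+R_{n}(y),C^{n}y)$ with $R_{n}(y)=\sum_{l=0}^{n-1}A^{n-1-l}BC^{l}y$, and a character has the form $\chi(u,v)=e^{2\pi i\langle m,u\rangle}\chi_{F}(v)$ with $m\in\mathbb{Z}^{d+1}$. Three structural observations then do the work. First, since $A$ has zero entropy, all its eigenvalues are roots of unity by Kronecker's theorem (being algebraic integers all of whose conjugates have modulus one), so $A$ is quasi-unipotent: $A^{q}$ is unipotent for some $q\ge1$, $(A^{q}-I)^{d+1}=0$ on $\mathbb{Z}^{d+1}$, and $A^{qk}$ has entries that are polynomials in $k$ of degree $\le d$. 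Second, $C$ has finite order and $B(F)$ is a finite (torsion) subgroup of $\mathbb{T}^{d+1}$, so $R_{n}(y)$ takes values in a fixed finite subset of $\mathbb{T}^{d+1}$; since it obeys the deterministic recursion $R_{n+1}(y)=AR_{n}(y)+BC^{n}y$, the sequence $n\mapsto(R_{n}(y),C^{n}y)$ is eventually periodic, say of period $p$ from some index $n_{0}$ on. Third, combining these, for $n=n_{0}+q'k+r$ with $q'=\operatorname{lcm}(q,p)$ and $0\le r<q'$ one gets $\chi(T^{n}(x,y))=\omega_{r}\,e^{2\pi i Q_{m,r}(k)}$, where $\omega_{r}$ is a unimodular constant and $Q_{m,r}\in\mathbb{R}_{d}[t]$. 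Splitting $\sum_{n<N}$ over residues $r$, reindexing $k\leftrightarrow n$ (which keeps the phase a real polynomial of degree $\le d$ in $n$), and expanding the congruence $n\equiv n_{0}+r\ (\mathrm{mod}\ q')$ as $\frac{1}{q'}\sum_{j=0}^{q'-1}e^{2\pi ij(n-n_{0}-r)/q'}$ reduces everything to finitely many averages $\frac1N\sum_{n<N}c_{n}e^{2\pi i(\widetilde Q(n)+jn/q')}$ with $\widetilde Q+jt/q'\in\mathbb{R}_{d}[t]$ (using $d\ge1$); these vanish by the definition of oscillation of order $d$. The affine case (3) runs identically on $\mathbb{T}^{d}\times F$; the only difference is the extra translation term $\sum_{l<n}A^{l}b$, whose restriction to a progression can have degree up to $d$ because $(A^{q}-I)^{d}=0$ on $\mathbb{Z}^{d}$ and summation raises the degree by one — still within $\mathbb{R}_{d}[t]$, so nothing changes.

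For the converse it suffices to exhibit, for every $P\in\mathbb{R}_{d}[t]$, a realization $(f(T^{n}x))$ of an admissible system equal to $(e^{2\pi iP(n)})$. Expanding $P$ in the Newton basis, $P(n)=\sum_{j=0}^{d}\binom{n}{j}\beta_{j}$, take on $\mathbb{T}^{d+1}$ the unipotent automorphism $I+N$ with $N$ the standard nilpotent Jordan block (so $N^{d+1}=0\neq N^{d}$, i.e.\ it is special and of zero entropy), extended to $\mathbb{T}^{d+1}\times F$ by the identity on $F$; with base point $x=(\beta_{0},\dots,\beta_{d})$ and $f$ the first-coordinate character one gets $f(T^{n}(x,\cdot))=e^{2\pi iP(n)}$. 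In fact any special zero-entropy automorphism works: its section is a single unipotent Jordan block over $\mathbb{Q}$, so for a suitable $m$ the functionals $x\mapsto\langle m,N^{j}x\rangle$, $j=0,\dots,d$, are linearly independent and their values sweep out all of $\mathbb{R}^{d+1}$ as $x$ varies. For (4), with a fixed special section $A$ on $\mathbb{T}^{d}$ ($N=A-I$, $N^{d}=0\neq N^{d-1}$), use $T(x,y)=(Ax+b,y)$: here $A^{n}x$ realizes all real polynomials of degree $\le d-1$, while $\sum_{l<n}A^{l}b$ has top coefficient proportional to $\langle m,N^{d-1}b\rangle$ with $x\mapsto\langle m,N^{d-1}x\rangle$ nonzero, so letting $b$ and the base point vary produces every $P\in\mathbb{R}_{d}[t]$.

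I expect the main obstacle to be the forward implication, and within it the second and third structural observations: pinning down precisely why the ``non-polynomial'' ingredients of an orbit — the $F$-coordinate and the twisting cocycle $R_{n}$ — are eventually periodic (this is where torsion of $B(F)$ and finiteness of $F$ enter), so that genuine polynomial growth along arithmetic progressions never exceeds degree $d$, and then carrying out the arithmetic-progression bookkeeping uniformly over the finitely many residues and characters. The zero-entropy $\Rightarrow$ quasi-unipotent input is classical and the converse realizations are explicit, so the difficulty is concentrated there.
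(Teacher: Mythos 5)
Your proposal is correct and follows the same overall architecture as the paper: the two cycles $(0)\Rightarrow(1)\Rightarrow(2)\Rightarrow(0)$ and $(0)\Rightarrow(3)\Rightarrow(4)\Rightarrow(0)$, the reduction to characters, quasi-unipotency of zero-entropy automorphisms via Kronecker's theorem, and the realization of $e^{2\pi i P(n)}$ through the binomial expansions of $(I+N)^n$ and $\sum_{k<n}(I+N)^k$ in the converse directions. Where you genuinely diverge is in the machinery for the forward implications. The paper isolates Lemma \ref{arithmetic} (arithmetic subsequences of oscillating sequences are oscillating, proved by induction on $d$ using a result from \cite{fj}) and Proposition \ref{rational} (perturbation by rational polynomials of arbitrary degree), and it feeds the torsion contributions of the orbit (the terms built from $B_qb_q$, e.g.\ $H_n$, which produce rational polynomials of degree up to $d+1$) into Proposition \ref{rational}. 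You instead note that all torsion ingredients, i.e.\ the pair $(R_n(y),C^ny)$, evolve deterministically inside a finite set (since $B(F)$ is a finite torsion subgroup of the torus preserved by the integer matrix $A$), hence are eventually periodic and contribute only a unimodular constant along each progression; and you handle the restriction to progressions by expanding the indicator $\mathbbm{1}_{n\equiv r\,(q')}$ into exponentials $e^{2\pi i jn/q'}$ and absorbing the linear phase into the degree-$d$ polynomial (legitimate since $d\ge1$). This single exponential-sum identity replaces both Lemma \ref{arithmetic} and Proposition \ref{rational} and avoids the external input from \cite{fj}; the trade-off is that the eventual periodicity must be justified, which you do. On the converse side you also tighten a point the paper treats loosely: a special automorphism need not have $N$ in standard Jordan form over $\mathbb{Z}$, and your choice of an integer character $m$ with $(N^{T})^{d}m\ne0$, making the functionals $x\mapsto\langle m,N^{j}x\rangle$ independent, is the correct substitute for applying formula (\ref{(4)}) in a ``suitable basis''. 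The one shared implicit assumption is that the density argument reducing to characters requires $\frac1N\sum_{n<N}|c_n|$ to stay bounded, which the paper also uses without comment in Proposition \ref{keylemma}.
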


We remark that the finite abelian group $F$ in Theorem \ref{main1} can be the trivial group so that $\mathbb{T}^d \times F$ is nothing but the torus $\mathbb{T}^d$.

For full oscillating sequences, we have the following theorem.
\begin{thm}\label{main2}
	A sequence of complex number $(c_n)$ is full oscillating if and only if it is linearly disjoint from all affine maps on any compact abelian group of zero entropy.
\end{thm}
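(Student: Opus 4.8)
Both implications will be deduced from Theorem \ref{main1}(3); the genuinely new content is a reduction from an arbitrary compact abelian group to the finite-dimensional groups $\mathbb T^{m}\times F$. The ``if'' direction needs nothing further: every $\mathbb T^{d}\times F$ with $F$ a finite abelian group is a compact abelian group, so if $(c_n)$ is linearly disjoint from all affine maps of zero entropy on all compact abelian groups then in particular it is linearly disjoint from all affine maps of zero entropy on all $\mathbb T^{d}\times F$, and Theorem \ref{main1}(3) makes it oscillating of order $d$ for every $d\ge 1$, i.e. fully oscillating.

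For the ``only if'' direction, suppose $(c_n)$ is fully oscillating, let $Tx=Ax+b$ be an affine map of zero entropy on a compact metrizable abelian group $X$ (so $A$ also has zero entropy, the topological entropy of an affine transformation of a compact group being that of its automorphism part), and fix $x\in X$, $f\in C(X)$. First I would reduce to characters: trigonometric polynomials are dense in $C(X)$, the averages in (\ref{ort}) depend linearly on $f$, and $(c_n)$ is bounded, so it is enough to show $\frac1N\sum_{n=0}^{N-1}c_n\,\chi(T^nx)\to 0$ for each $\chi\in\widehat X$. Then I would pass to a finite-dimensional factor. Let $A^{*}$ be the dual automorphism of $\widehat X$ (so $\chi\circ A=A^{*}\chi$) and let $\Gamma\le\widehat X$ be the subgroup generated by the orbit $\{(A^{*})^{n}\chi:n\in\mathbb Z\}$; the key claim is that $\Gamma$ is finitely generated as an abelian group. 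Granting this, write $\Gamma\cong\mathbb Z^{m}\oplus F$ with $F$ finite, so that $X_0:=\widehat\Gamma\cong\mathbb T^{m}\times F$; since $\Gamma$ is $A^{*}$-invariant, $A$ descends to an automorphism $A_0$ of $X_0$, and with the image $b_0$ of $b$ this yields an affine map $T_0x=A_0x+b_0$ on $\mathbb T^{m}\times F$ which has zero entropy (being a factor of $(X,T)$). For the canonical projection $\pi\colon X\to X_0$ one has $\pi\circ T=T_0\circ\pi$ and $\chi=\chi_0\circ\pi$ for some $\chi_0\in\widehat{X_0}=\Gamma$, hence $\chi(T^nx)=\chi_0(T_0^{\,n}\pi(x))$. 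Since $(c_n)$ is fully oscillating it is oscillating of order $m$, so Theorem \ref{main1}(3) gives that $(c_n)$ is linearly disjoint from $(X_0,T_0)$; applied with the observable $\chi_0$ at the point $\pi(x)$ this is exactly the assertion that $\frac1N\sum_{n<N}c_n\chi(T^nx)\to 0$.

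What remains, and what I expect to be the main obstacle, is the finite-generation claim. Here is how I would attack it: $\Gamma$ being $A^{*}$-invariant, $\widehat\Gamma$ is a topological factor of $(X,A)$, so the induced automorphism on $\widehat\Gamma$ has zero entropy; moreover $\Gamma$ is generated by a single element as a module over $\mathbb Z[A^{*},(A^{*})^{-1}]$, so $\widehat\Gamma$ is given by a cyclic presentation and its entropy is the logarithmic Mahler measure of that presentation, which therefore vanishes. By Kronecker's theorem on integer polynomials of Mahler measure $1$, $A^{*}$ then satisfies on $\Gamma$ a monic integer polynomial with unit constant term, whence $\Gamma$ is finitely generated over $\mathbb Z$ and $A^{*}$ acts on it quasi-unipotently; the finite-rank and torsion parts of the quotient have to be treated separately, which is where I expect the care to be needed. (Equivalently, a zero-entropy automorphism of a compact metrizable abelian group exhibits its dual as a directed union of finitely generated $A^{*}$-invariant subgroups, i.e. $X$ as an inverse limit of zero-entropy automorphisms of groups $\mathbb T^{m}\times F$; compare the discussion of zero-entropy automorphisms in \cite{ls}.) This is the only point where the hypothesis of zero entropy is genuinely used; once it is in hand, Theorem \ref{main1}(3) does the rest.
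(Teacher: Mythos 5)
Your argument follows the same route as the paper: both directions come down to Theorem \ref{main1}(3), and the whole content of the ``only if'' direction is the reduction of an arbitrary zero-entropy affine map on a compact abelian group to one on $\mathbb{T}^{m}\times F$. The paper disposes of that reduction in one sentence by citing \cite{ls}, where it is observed that every orbit of such a map is realized by an orbit of a zero-entropy affine map on some $\mathbb{T}^{m}\times F$; what you sketch --- passing to the $A^{*}$-invariant subgroup $\Gamma\le\widehat{X}$ generated by the orbit of a single character and proving it finitely generated --- is precisely the content of that citation, so relative to the paper you are not missing any step that the paper actually supplies. That said, your sketch of the finite-generation claim is not yet a proof: the identification of the entropy of $\widehat{\Gamma}$ with a logarithmic Mahler measure is literally valid only when the annihilator ideal of $\chi$ in $\mathbb{Z}[u^{\pm 1}]$ is principal; for a general cyclic module one needs the Yuzvinskii addition formula (summing entropies over a prime filtration, as in the Lind--Schmidt--Ward theory) to conclude that zero entropy forces every associated prime to be either a principal ideal generated by a product of cyclotomic polynomials or a non-principal prime containing a rational prime, and hence that $\Gamma$ is a finitely generated abelian group on which $A^{*}$ acts quasi-unipotently (the torsion part being handled by the finiteness of its automorphism group). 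Once that is in place --- or once you simply quote \cite{ls} as the paper does --- the rest of your argument (density of characters, the factor map $\pi\colon X\to\widehat{\Gamma}\cong\mathbb{T}^{m}\times F$, zero entropy of factors, and Theorem \ref{main1}(3)) is correct and is exactly how the paper intends the proof to go.
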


The necessity in Theorem \ref{main2} is essentially proved in  \cite{ls}, where the authors observe that every orbit of any affine map of zero entropy on any compact abelian group could be realized by an orbit of an affine map of zero entropy on $\mathbb{T}^{d+1}\times F$ for some finite abelian group $F$. On the other hand, the sufficiency is a direct consequence of Theorem \ref{main1}.

A direct consequence from Theorem \ref{main2} is that the $(S)$ sequences which are defined in \cite{akld} to be linearly disjoint from all dynamical systems of zero entropy are fully oscillating. Thus the fully oscillating sequences are candidates of $(S)$ sequences and conversely $(S)$ sequences give abundant examples of fully oscillating sequences.

\medskip

This paper is organized as follows. First, we give some equivalent descriptions of oscillating sequences of high orders in Section \ref{defi}. In Section  \ref{disjointness}, we show the oscillating disjointness from quasi-unipotent maps on compact abelian groups. The key point as observed in \cite{d,ls} is that automorphisms of zero entropy on compact abelian groups are quasi-unipotent.  After these preparations, we prove Theorem \ref{main1} in the last section.


\section{Oscillating property}\label{defi}
In this section, we revisit the definition of oscillating sequences of higher orders. The following lemma is evident by the definition of oscillating sequences because the constant term of the polynomial does not play any role. 


\begin{lem}\label{lem00000}
	Let $d\ge 1$. A sequence of complex numbers $(c_n)$  is oscillating of order $d$
	if and only if 
	$$
	\lim\limits_{N\to \infty}\frac{1}{N}\sum_{n=0}^{N-1} c_n e^{2\pi i nQ(n)}=0
	$$
	for any $Q\in \mathbb{R}_{d-1}[t]$.
\end{lem}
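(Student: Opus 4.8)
The plan is to reduce the equivalence to the elementary algebraic fact that every $P\in\mathbb{R}_d[t]$ decomposes as $P(t)=P(0)+tQ(t)$ with $Q\in\mathbb{R}_{d-1}[t]$: the polynomial $P(t)-P(0)$ vanishes at $t=0$, hence is divisible by $t$, and the quotient has degree at most $d-1$ (if $P$ is constant, take $Q=0$). Correspondingly, $e^{2\pi i P(n)}=e^{2\pi i P(0)}e^{2\pi i nQ(n)}$ for every $n$.

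For the ``only if'' direction I would argue as follows. Assume $(c_n)$ is oscillating of order $d$ and let $Q\in\mathbb{R}_{d-1}[t]$ be arbitrary. Put $P(t):=tQ(t)$, so that $P\in\mathbb{R}_d[t]$ and $P(n)=nQ(n)$ for all $n$. Then $\frac{1}{N}\sum_{n=0}^{N-1} c_n e^{2\pi i nQ(n)}=\frac{1}{N}\sum_{n=0}^{N-1} c_n e^{2\pi i P(n)}$, which tends to $0$ by hypothesis.

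For the converse, assume the displayed condition holds for all $Q\in\mathbb{R}_{d-1}[t]$ and let $P\in\mathbb{R}_d[t]$ be arbitrary. Writing $P(t)=P(0)+tQ(t)$ with $Q\in\mathbb{R}_{d-1}[t]$ as above, the unimodular constant $e^{2\pi i P(0)}$ factors out of the Cesàro average, so $\bigl|\frac{1}{N}\sum_{n=0}^{N-1} c_n e^{2\pi i P(n)}\bigr|=\bigl|\frac{1}{N}\sum_{n=0}^{N-1} c_n e^{2\pi i nQ(n)}\bigr|$, which tends to $0$. Hence $(c_n)$ is oscillating of order $d$.

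There is essentially no obstacle here; the only point deserving a line of care is the degree bookkeeping, namely that dividing out a factor of $t$ lowers the degree by exactly one (so $P\in\mathbb{R}_d[t]$ yields $Q\in\mathbb{R}_{d-1}[t]$) and, conversely, multiplying $Q\in\mathbb{R}_{d-1}[t]$ by $t$ stays inside $\mathbb{R}_d[t]$. This is precisely the remark that ``the constant term of the polynomial does not play any role'' made quantitative.
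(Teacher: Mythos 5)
Your proof is correct and is exactly the argument the paper has in mind: the paper dismisses the lemma as ``evident \dots because the constant term of the polynomial does not play any role,'' and your decomposition $P(t)=P(0)+tQ(t)$ with $Q\in\mathbb{R}_{d-1}[t]$, together with factoring the unimodular constant $e^{2\pi i P(0)}$ out of the Ces\`aro average, is precisely that observation made explicit.
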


The following lemma has its own interest. A stronger form was proved in \cite{f2}. The proof given here is based on a fact proved in \cite{fj}.

\begin{lem}\label{arithmetic}
	Let $d\ge 1$. 	A sequence of complex numbers $(c_n)$ is oscillating of order $d$ if and only if all arithmetic subsequences $(c_{an+b})$ are oscillating of order $d$ for all integers $a\in \mathbb{N}^*$ and $b\in \mathbb{N}.$ 	
\end{lem}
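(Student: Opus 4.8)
The plan is to treat the two implications separately. The ``if'' direction is immediate: taking $a=1$ and $b=0$ among the arithmetic subsequences returns $(c_n)$ itself, so there is nothing to prove.

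For the ``only if'' direction, assume $(c_n)$ is oscillating of order $d$ and fix $a\in\mathbb{N}^*$, $b\in\mathbb{N}$, and $P\in\mathbb{R}_d[t]$; the goal is $\frac1N\sum_{n=0}^{N-1}c_{an+b}e^{2\pi iP(n)}\to0$. I would first reduce to $0\le b<a$: for larger $b$ the sequence $(c_{an+b})_n$ is a shift of $(c_{an+(b\bmod a)})_n$, and a shift alters the partial sums only by a bounded, $N$-independent correction, hence leaves the Cesàro limit unchanged. Then set $Q(t):=P\big((t-b)/a\big)$, so that $Q\in\mathbb{R}_d[t]$ and $P(n)=Q(an+b)$, and reindex by $m=an+b$: with $M=M(N):=(N-1)a+b+1$ the indices $an+b$ for $0\le n\le N-1$ are exactly the integers $m\in[0,M)$ with $m\equiv b\pmod a$. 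The key step is to select this residue class with roots of unity,
$$\mathbf 1[m\equiv b\ (\mathrm{mod}\ a)]=\frac1a\sum_{j=0}^{a-1}e^{2\pi ij(m-b)/a},$$
which rewrites $\frac1N\sum_{n=0}^{N-1}c_{an+b}e^{2\pi iP(n)}$ as a linear combination, with coefficients of modulus $\le 1/a$, of the $a$ quantities $\frac1N\sum_{m=0}^{M-1}c_me^{2\pi i(Q(m)+jm/a)}$, $j=0,\dots,a-1$. Because $d\ge1$, each tilted polynomial $Q(t)+jt/a$ again has degree at most $d$, so the oscillation hypothesis applied to it gives $\frac1M\sum_{m=0}^{M-1}c_me^{2\pi i(Q(m)+jm/a)}\to0$; multiplying by the bounded factor $M/N\to a$ and summing over the finitely many $j$ yields the claim.

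The substance of the argument is just these two elementary points: the root-of-unity selection of the progression, which pushes everything back onto the original hypothesis applied to the tilted polynomials $Q(t)+jt/a$; and the transfer of a Cesàro average in the variable $m$, taken over windows of length $M(N)\sim aN$, back to one in $n$. The one thing to be careful about is that $(c_n)$ is not assumed bounded, so both the shift reduction and the window comparison must be phrased as ``a fixed finite partial sum divided by $N$ tends to $0$'' rather than absorbed into a supremum bound on $|c_n|$; this is precisely the flavour of the averaging fact available from \cite{fj}, whose use is what keeps the write-up short. I do not expect any obstacle beyond this bookkeeping.
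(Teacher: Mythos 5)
Your proof is correct, but it takes a genuinely different route from the paper's. The paper splits the necessity into two steps: translation invariance (which is easy because the averaging window only extends forward), and the dilation $(c_{an})$, which it handles by induction on $d$ --- the base case $d=1$ is imported from Proposition~4 of \cite{fj}, and the inductive step twists the sequence into $\widetilde c_n = c_n e^{2\pi i (w/a^d) n^d}$ so as to absorb the top-degree coefficient of $P$ and drop down to order $d-1$. You instead argue in one shot: the roots-of-unity selector $\frac1a\sum_{j=0}^{a-1}e^{2\pi i j(m-b)/a}$ converts the average along the progression into a finite combination of full averages of $c_m$ against $e^{2\pi i(Q(m)+jm/a)}$, and the tilted polynomials $Q(t)+jt/a$ still lie in $\mathbb{R}_d[t]$ precisely because $d\ge 1$. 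Your version is self-contained (it does not need the external base case, and in effect re-proves it), avoids the induction, and makes transparent exactly where the hypothesis $d\ge 1$ enters --- the same place the statement fails for plain Ces\`aro convergence, as $c_n=(-1)^n$ shows. What the paper's route buys is brevity on the page and reuse of \cite{fj}. Your bookkeeping is also sound on the one delicate point: since $(c_n)$ is not assumed bounded, the shift reduction must be arranged so that only an $N$-independent block of terms is discarded and the window is compared via the ratio $M/N\to a$, which is exactly how you phrase it; note that your reduction to $0\le b<a$ must come after (or be folded into) the roots-of-unity step, since it invokes oscillation of $(c_{an+r})$, but the order you present is consistent with this.
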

\begin{proof}
	The sufficiency being trivial, we just prove the necessity. Assume that $(c_n)$ is oscillating of order $d$. By definition, it is easy to see that for any $b\in \mathbb{N}$, the translated sequence  $(c_{n+b})$ is also oscillating of order $d$. Thus we only need to prove that $(c_{an})$ is oscillating of order $d$ for any $a\in \mathbb{N}^*$. We will prove this by induction on $d$ based on Proposition 4 in \cite{fj}, which states that  the case $d=1$ is true.
	

	
	Suppose that the assertion is proved for $d-1$. Take any polynomial $P\in \mathbb{R}_d[t]$ and write $P(t)=wt^d+Q(t)$ with $Q\in \mathbb{R}_{d-1}[t]$.
	Since $(c_n)$ is oscillating of order $d$, the sequence $\widetilde{c}_n:=c_ne^{2\pi i \frac{w}{a^d} n^d}$ is also oscillating of order $d$, a fortiori, $\widetilde{c}_n$ is oscillating of order $d-1$. Then by the hypothesis of induction, $\widetilde{c}_{an}$ is also oscillating of order $d-1$, in particular, 
	$$
	\lim\limits_{N\to \infty}\frac{1}{N}\sum_{n=0}^{N-1} \widetilde{c}_{an} e^{2\pi i Q(n)}=0.
	$$
	Now it suffices to observe
	$$
	\forall n\in \mathbb{N}^*,~~~  \widetilde{c}_{an} e^{2\pi i Q(n)}= c_{an} e^{2\pi i P(n)}
	$$
	to conclude that $(c_{an})$ is oscillating of order $d$.
	
\end{proof}

The following proposition will be used to prove Theorem \ref{main1}.

\begin{prop}\label{rational}
		A sequence of complex numbers $(c_n)$ is oscillating of order $d$ if and only if 
		$$
		\lim\limits_{N\to \infty}\frac{1}{N}\sum_{n=0}^{N-1} c_n e^{2\pi i (P(n)+Q(n))}=0
		$$
		for any $P\in \mathbb{R}_d[t]$ and any $Q\in \mathbb{Q}[t]$.

\end{prop}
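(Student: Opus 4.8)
The plan is to reduce the statement to the combination of Lemma \ref{lem00000} and Lemma \ref{arithmetic}. The nontrivial direction is the "only if" part: assuming $(c_n)$ is oscillating of order $d$, we must show that twisting by $e^{2\pi i Q(n)}$ for a rational polynomial $Q$ does not destroy the oscillation against real polynomials $P\in\mathbb{R}_d[t]$. First I would clear denominators: write $Q\in\mathbb{Q}[t]$ with a common denominator $a\in\mathbb{N}^*$ for all its coefficients, so that $Q(t)=R(t)/a$ with $R\in\mathbb{Z}[t]$. The key observation is that the integer-valued quantity $R(n)\bmod a$, and more precisely the value of $e^{2\pi i Q(n)}=e^{2\pi i R(n)/a}$, depends only on the residue class of $n$ modulo $a$ (in fact modulo $a$ suffices since $R$ has integer coefficients, so $R(n+a)\equiv R(n)\pmod a$). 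Hence $e^{2\pi i Q(n)}$ is a periodic function of $n$ with period $a$.

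Second, I would split the averaging sum according to the residue of $n$ modulo $a$. For each fixed $r\in\{0,1,\dots,a-1\}$ the value $e^{2\pi i Q(an+r)}=:\zeta_r$ is a constant (a root of unity), so
$$
\frac{1}{N}\sum_{n=0}^{N-1} c_n e^{2\pi i(P(n)+Q(n))}
=\sum_{r=0}^{a-1}\zeta_r \cdot \frac{1}{N}\sum_{\substack{0\le n<N\\ n\equiv r\,(a)}} c_n e^{2\pi i P(n)}
=\sum_{r=0}^{a-1}\zeta_r \cdot \frac{1}{N}\sum_{m} c_{am+r}\, e^{2\pi i P(am+r)},
$$
where the inner sum runs over the $m$ with $0\le am+r<N$, of which there are $\sim N/a$. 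Now $P(am+r)$, as a function of $m$, is again a real polynomial of degree $\le d$, namely $P_r(m):=P(am+r)\in\mathbb{R}_d[t]$. By Lemma \ref{arithmetic} the arithmetic subsequence $(c_{am+r})_m$ is oscillating of order $d$, so $\frac{1}{\lfloor N/a\rfloor}\sum_{m<\lfloor N/a\rfloor} c_{am+r}e^{2\pi i P_r(m)}\to 0$; after adjusting the normalization from $1/N$ to $1/\lfloor N/a\rfloor$ (a bounded factor tending to $a$, with $O(1)$ boundary terms) each of the $a$ inner averages tends to $0$. Since there are finitely many terms $r=0,\dots,a-1$ and the $\zeta_r$ are bounded by $1$, the whole expression tends to $0$, which is exactly the desired conclusion. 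The converse direction is immediate by taking $Q=0$.

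The main obstacle, such as it is, is purely bookkeeping: matching the normalization $1/N$ of the original Cesàro average with the $1/\lfloor N/a\rfloor$ appearing naturally in each arithmetic subsequence, and controlling the $O(a)$ boundary terms coming from the fact that $\{0,\dots,N-1\}$ need not be a union of complete residue classes mod $a$. These are harmless since $a$ is fixed and $(c_n)$ is bounded — or, if $(c_n)$ is not assumed bounded, one still gets the estimate because each block contributes a vanishing average and the boundary contributes $o(1)$ relative to $N$. One could alternatively avoid the splitting entirely: absorb the rational polynomial differently, but the residue-class decomposition together with Lemma \ref{arithmetic} is the cleanest route, and I would present it that way. (Lemma \ref{lem00000} is not strictly needed here, though it could be invoked to reduce $P$ to the form $n\widetilde P(n)$ if one preferred to combine $P+Q$ into a single polynomial; I would not bother.)
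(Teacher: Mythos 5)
Your proposal is correct and follows essentially the same route as the paper: establish that $e^{2\pi i Q(n)}$ is periodic in $n$, decompose the Ces\`aro average over residue classes modulo the period, and invoke Lemma \ref{arithmetic} on each arithmetic subsequence. The only cosmetic difference is that you obtain the periodicity by clearing denominators, while the paper argues via products of periodic sequences; the bookkeeping about normalization that you discuss is harmless and implicit in the paper's version.
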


\begin{proof}
	The sufficiency being trivial, we just prove the necessity. We first remark that for any $Q\in \mathbb{Q}[t]$, the sequence $(e^{2\pi i Q(n)})$ is periodic. Indeed, 
	on one hand, $(e^{2\pi i \frac{p}{q} n^k})$ is $q$-periodic for any integer $k$ and any rational number $\frac{p}{q}$; on the other hand,
	the product of two periodic sequences is still periodic and the l.c.m of the two periods is a period of the product.

	
	Let $\ell$ be a period of the sequence $(e^{2\pi i Q(n)})$, then
	\begin{align*}
		\sum_{n=0}^{N-1} c_n e^{2\pi i (P(n)+Q(n))}
		&=\sum_{j=0}^{\ell-1}\sum_{\substack {0\le n\le N-1\\n\equiv j \mod \ell}} c_n e^{2\pi i (P(n)+Q(n))}\\
		&=\sum_{j=0}^{\ell-1} e^{2\pi i Q(j)} \sum_{\substack {0\le n\le N-1\\n\equiv j \mod \ell}} c_n e^{2\pi i P(n)}.
	\end{align*}
	Thus we can conclude by Lemma \ref{arithmetic}.
\end{proof}

We finish this section by pointing out that the notion of oscillation of order $d (\ge 2)$ is stronger than the notion of weak oscillation of order $d (\ge 2)$ introduced in \cite{fj}. 
A sequence $(c_n)$ is called \textit{weakly oscillating of order $d$ } if 
$$
\lim\limits_{N\to \infty}\frac{1}{N}\sum_{n=0}^{N-1} c_n e^{2\pi i n^kt}=0
$$
for any $0\le k\le d$ and any $t\in [0,1)$.  A sequence is said to be \textit{fully weakly oscillating} if it is weakly oscillating of order $d$ for all integers $d\ge 1$. 
By Lemma \ref{lem00000}, it is evident that weakly oscillating sequences of order $1$ are exactly oscillating sequences of order $1$. 
But it is not the case when $d \ge 2$. 

\begin{prop}\label{weakoscillating} Let $d\ge 2$. There exist weakly oscillating sequences of order $d$ which are not oscillating of order $d$.
\end{prop}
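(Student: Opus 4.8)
The plan is to construct an explicit sequence that kills every single monomial phase $e^{2\pi i n^k t}$ for $k \le d$ in the Cesàro sense, yet fails to kill some genuine degree-$d$ polynomial phase $e^{2\pi i P(n)}$; by Lemma \ref{lem00000} it is enough to exhibit a $Q \in \mathbb{R}_{d-1}[t]$ for which $\frac1N\sum_{n<N} c_n e^{2\pi i n Q(n)}$ does not go to zero. The natural candidate is to take $c_n$ itself of the form $c_n = e^{2\pi i R(n)}$ for a suitable real polynomial $R$ of degree $d$, so that the question reduces entirely to understanding Weyl sums: we need $R$ such that $R(n) + n^k t$ has an irrational leading-or-lower coefficient (hence equidistributes, giving limit zero) for every fixed $k \le d$ and every $t \in [0,1)$, while $R(n) - P(n)$ is, say, identically an integer polynomial or has all rational coefficients for some polynomial $P$ we care about — forcing the corresponding average to be a nonzero constant.

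The cleanest instance already lives in degree $d$ without any $t$-dependence obstruction: take $R(t) = \alpha t^d$ with $\alpha$ irrational, so $c_n = e^{2\pi i \alpha n^d}$. For the monomial test $e^{2\pi i n^k t}$ with $k < d$, the phase $\alpha n^d + t n^k$ has irrational leading coefficient $\alpha$, so by Weyl's theorem the sequence $(\alpha n^d + t n^k)$ is equidistributed mod $1$ and the Cesàro average vanishes; for $k = d$ the leading coefficient is $\alpha + t$, which is irrational whenever $t \notin \mathbb{Q}$ — and for $t \in \mathbb{Q}$ one argues directly as in the proof of Proposition \ref{rational}, splitting into arithmetic progressions mod the denominator and applying Weyl on each (the phase restricted to a progression is again a polynomial with irrational top coefficient). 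Hence $(c_n)$ is weakly oscillating of order $d$. But testing against $P(t) = -\alpha t^d$, which is a perfectly legitimate element of $\mathbb{R}_d[t]$, gives $c_n e^{2\pi i P(n)} = 1$ for all $n$, so the Cesàro average is $1 \ne 0$, and $(c_n)$ is not oscillating of order $d$. I would also remark that $R(t) = \alpha t^d$ in fact yields a \emph{fully} weakly oscillating sequence by the same Weyl argument, since $\alpha n^d + t n^k$ has irrational leading coefficient for every $k$, not just $k \le d$.

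The one point that needs genuine care — and is the main obstacle — is the rational values of $t$ in the degree-$d$ monomial test (and, if one wants the stronger claim, in all degrees): there $\alpha + t$ is still irrational so the naive Weyl bound applies directly, so actually this case is fine; the real subtlety is only cosmetic, namely making sure the equidistribution statement is uniform enough, which it need not be — pointwise vanishing of each Cesàro average for each fixed $t$ is all the definition of weak oscillation demands. So in fact no uniformity is needed and the argument is short. If one instead wanted the sharper separation showing the gap is large (e.g. a sequence weakly oscillating of all orders but oscillating of no order $\ge d$), I would iterate: use $c_n = e^{2\pi i \alpha n^d}$ as above and note it is simultaneously oscillating of every order $< d$ fails — but for the stated proposition the single example $e^{2\pi i \alpha n^d}$ with $\alpha$ irrational suffices, and I would present exactly that, with the Weyl equidistribution input cited as the one external ingredient.
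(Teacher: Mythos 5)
There is a genuine gap, and it sits exactly at the point you flagged as ``fine.'' With $c_n = e^{2\pi i \alpha n^d}$, the degree-$d$ monomial test requires
\[
\lim_{N\to\infty}\frac{1}{N}\sum_{n=0}^{N-1} e^{2\pi i (\alpha+t) n^d}=0 \quad\text{for \emph{every} } t\in[0,1),
\]
and your case analysis is backwards: when $t\in\mathbb{Q}$ the sum is harmless, because $\alpha+t$ is automatically irrational and Weyl applies directly. The dangerous case is $t\notin\mathbb{Q}$, where your claim that ``$\alpha+t$ is irrational whenever $t\notin\mathbb{Q}$'' is false --- the sum of two irrationals can be an integer. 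Concretely, take $t = 1-\{\alpha\}\in(0,1)$, which is irrational since $\alpha$ is; then $\alpha+t\equiv 0 \pmod 1$, so $c_n e^{2\pi i n^d t}=1$ for all $n$ and the Ces\`aro average equals $1$. Hence $e^{2\pi i \alpha n^d}$ is \emph{not} weakly oscillating of order $d$, and the same objection defeats your closing remark that it is fully weakly oscillating.

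The repair is the one the paper uses, and it is also why the hypothesis $d\ge 2$ is there: take $c_n = e^{2\pi i(\alpha n^d + \beta n^{d-1})}$ with $\alpha,\beta$ both irrational. For $k<d$ the phase $\alpha n^d+\beta n^{d-1}+tn^k$ has irrational leading coefficient $\alpha$; for $k=d$ the phase is $(\alpha+t)n^d+\beta n^{d-1}$, and even when the adversarial choice of $t$ makes $\alpha+t$ rational, the coefficient $\beta$ of the non-constant term $n^{d-1}$ is irrational, so Weyl's theorem (equidistribution of a polynomial with at least one irrational non-constant coefficient) still gives a vanishing average. Non-oscillation of order $d$ is then seen by testing against $P(t)=-\alpha t^d-\beta t^{d-1}\in\mathbb{R}_d[t]$, exactly as in your argument. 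Your overall strategy --- a single polynomial exponential, Weyl sums on one side and exact cancellation on the other --- is the paper's strategy; the missing idea is the lower-order irrational term that protects the degree-$d$ test against cancellation of the leading coefficient.
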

\begin{proof}
	The following sequence defined by
	$$
	c_n:=e^{2\pi i (n^d\alpha+n^{d-1}\beta)}~~ \text{where}~~ \alpha, \beta\in \mathbb{R}\setminus\mathbb{Q}
	$$
	is weakly oscillating of order $d$ but not oscillating of order $d$.
\end{proof}

A direct corollary of Proposition \ref{weakoscillating} is that a fully weakly oscillating sequence is not necessarily a fully oscillating sequence.

\section{Quasi-unipotent automorphisms}\label{disjointness}

In \cite{ls}, the authors proved the M\"{o}bius disjointness from affine maps on compact abelian group of zero entropy by using the fact that the automorphisms on torus of zero entropy are quasi-unipotnent (\cite{d}). In this section, we examine the orbits of quasi-unipotnent automorphisms on compact abelian groups. Recall that an automorphism $A$ on an abelian group is said to be \textit{quasi-unipotent of type $(m,l)$} if $A^m=I+N$ with $I$ the identity and $N^l=0$, where $m\ge 1$ and $l\ge 1$ are integers.

The following fact is well known. We give a proof of it for the sake of completeness. 
\begin{lem}\label{lema0}
	Let $A$ be an automorphism on $\mathbb{T}^{d}$ of zero entropy. Then $A$ is quasi-unipotent of type $(d,d)$.
\end{lem}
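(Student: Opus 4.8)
The plan is to reduce the statement to a fact about integer matrices and then invoke Kronecker's theorem on algebraic integers. Identifying $\mathbb{T}^{d}$ with $\mathbb{R}^{d}/\mathbb{Z}^{d}$, the automorphism $A$ is multiplication by a matrix $M\in\mathrm{GL}_{d}(\mathbb{Z})$ (so $\det M=\pm1$), and the iterates of $A$ correspond to the powers $M^{k}$; it therefore suffices to produce an integer $m\ge1$ such that $(M^{m}-I)^{d}=0$, and then to read this back as an identity of endomorphisms of $\mathbb{T}^{d}$.

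First I would bring in the entropy hypothesis. By the classical entropy formula for toral automorphisms (Yuzvinskii, Bowen), the topological entropy of $A$ equals $\sum_{|\lambda|>1}\log|\lambda|$, the sum running over the eigenvalues $\lambda$ of $M$ counted with multiplicity; hence $h(A)=0$ forces $|\lambda|\le1$ for every eigenvalue $\lambda$. Since $\prod_{\lambda}|\lambda|=|\det M|=1$ and every factor is at most $1$, in fact $|\lambda|=1$ for all eigenvalues.

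Next, the characteristic polynomial $\chi_{M}$ is monic with integer coefficients, so each eigenvalue $\lambda$ is an algebraic integer; and, the minimal polynomial of $\lambda$ over $\mathbb{Q}$ being a divisor of $\chi_{M}$, all Galois conjugates of $\lambda$ are again eigenvalues of $M$, hence also lie on the unit circle. By Kronecker's theorem, an algebraic integer all of whose conjugates have modulus $1$ is a root of unity, so every eigenvalue of $M$ is a root of unity. Letting $m$ be a common multiple of their orders — a quantity bounded in terms of $d$ alone, since each such order $n$ satisfies $\varphi(n)\le d$ — the matrix $M^{m}$ has $1$ as its only eigenvalue. Thus the characteristic polynomial of $N:=M^{m}-I$ is $t^{d}$, and Cayley--Hamilton gives $N^{d}=0$; as $N$ is itself an integer matrix it induces an endomorphism of $\mathbb{T}^{d}$, so $A^{m}=I+N$ with $N^{d}=0$, which is the asserted quasi-unipotence (with nilpotency index $d$).

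The argument is essentially routine once the tools are lined up; the one genuine input is Kronecker's theorem, and the only point requiring care is to apply the entropy hypothesis to \emph{all} eigenvalues of $M$ — equivalently, to exploit that the eigenvalue set is stable under Galois conjugation — before concluding, via $\det M=\pm1$, that they all sit on the unit circle.
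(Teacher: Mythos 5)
Your argument follows the same route as the paper's proof: lift $A$ to a matrix $M\in\mathrm{GL}_d(\mathbb{Z})$, use zero entropy together with $|\det M|=1$ to place every eigenvalue on the unit circle, invoke Kronecker's theorem to conclude they are roots of unity, and then deduce nilpotency of $M^m-I$ for a suitable power $m$ (you via Cayley--Hamilton, the paper via triangularization over $\mathbb{C}$ --- an immaterial difference).

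The one point where you genuinely diverge is the choice of the exponent $m$, and there you are right and the paper is not. The paper asserts that Kronecker's theorem makes every eigenvalue a $d$-th root of unity and so takes $m=d$, which is what ``type $(d,d)$'' requires. Kronecker gives no such thing: as you correctly note, it only yields roots of unity whose orders $n$ satisfy $\varphi(n)\le d$, so in general one can only take $m$ to be a common multiple of these orders (bounded in terms of $d$, but not equal to $d$). Indeed the lemma as literally stated is false: on $\mathbb{T}^2$ the automorphism $A=\left(\begin{smallmatrix}0 & -1\\ 1 & 0\end{smallmatrix}\right)$ has zero entropy, yet $A^2-I=-2I$ is not nilpotent, so $A$ is quasi-unipotent of type $(4,1)$ but not of type $(2,2)$. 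Thus your proof establishes the correct statement --- quasi-unipotence of type $(m,d)$ for some $m=m(d)\ge 1$ --- rather than the stated one, and no proof of the stated one is possible. This weaker conclusion is all the rest of the paper actually uses: in the proof of Theorem \ref{main1} one should take $q$ to be $\sharp F!$ times a multiple of your $m$ instead of $\sharp F!\cdot d$, after which the factorization $A^q-I=(A^m-I)Q$ and everything downstream goes through unchanged.
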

\begin{proof}
The automorphism $A$ can be lifted to a linear automorphism $\widetilde{A}$ of $\mathbb{R}^d$ which preserves $\mathbb{Z}^d$ with $\det\widetilde{A}=\pm 1.$ 
Since the dynamics $(\mathbb{T}^d,T)$ has zero entropy, all the eigenvalues of $\widetilde{A}$ must have absolute value $1$ \cite{si}. According to Kronecker's theorem, all the eigenvalues of $\widetilde{A}$ are $d$-th unit roots. Thus all of eigenvalues of $\widetilde{A}^{d}$ are $1$. By Jordan's theorem,  there exists a matrix $P\in SL(d,\mathbb{C})$ such that $P\widetilde{A}^{d}P^{-1}$ is a triangular matrix with $1$ on its diagonal. Consequently we have $(P\widetilde{A}^{d}P^{-1}-I)^{d}=0$ so that $(\widetilde{A}^{d}-I)^{d}=0$. Therefore, we can write $A^d=I+N$ with  $N^d=0$. 
\end{proof}

Assume that the affine map $Tx= Ax +b$ has its automorphism $A$ which is quasi-unipotent of type $(m, l)$. Then we say that $T$ is quasi-unipotent of type $(m, l)$. Any orbits $(T^n x)$ of such an affine map has a nice behavior as shown in the following lemma. 

\begin{lem}\label{lemma1}
	Let $X$ be a compact abelian group. Let $T$ be an affine map on $X$ defined by $Tx=Ax+b$ where $b\in X$ and $A$ is a quasi-unipotent automorphism of type $(m,l)$. Then for any $n=qm+p\ge l$ where $1\le p\le m$ and $q\in \mathbb{N}$, we have
	$$
	T^nx=\sum_{k=0}^{l-1}(C_q^kN^kA^px+C_{q+1}^{k+1}N^kb^*+C_q^kN^kb_p),
	$$
	where $b^*=\sum_{k=0}^{m-1}A^kb$ and $b_p=\sum_{k=1}^{p}A^kb$. 
\end{lem}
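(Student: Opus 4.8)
The plan is to reduce the statement to the elementary closed form for the iterates of an affine map, combined with the binomial expansion coming from the quasi-unipotency of $A$. A one-line induction on $n$ shows that an affine map $Tx=Ax+b$ on any abelian group satisfies
$$
T^{n}x=A^{n}x+\sum_{j=0}^{n-1}A^{j}b ;
$$
no topology or compactness of $X$ is used anywhere, only the group law and the fact that $A$ is an endomorphism. To bring in the decomposition $n=qm+p$ with $1\le p\le m$, I would write $T^{n}=(T^{m})^{q}\circ T^{p}$. Evaluating the displayed formula at $n=m$ and using $A^{m}=I+N$ shows that $T^{m}$ is again affine, namely $T^{m}y=(I+N)y+b^{*}$ with $b^{*}=\sum_{k=0}^{m-1}A^{k}b$, while evaluating it at $n=p$ gives $T^{p}y=A^{p}y+\sum_{k=0}^{p-1}A^{k}b$. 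Applying the displayed formula once more to the affine map $y\mapsto (I+N)y+b^{*}$ yields
$$
(T^{m})^{q}y=(I+N)^{q}y+\Big(\sum_{i=0}^{q-1}(I+N)^{i}\Big)b^{*},
$$
and evaluating this at $y=T^{p}x$ already expresses $T^{n}x$ in terms of $(I+N)^{q}$, of $\sum_{i=0}^{q-1}(I+N)^{i}$, and of $A^{p}x$, $b^{*}$ and $b_{p}$.

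The second step is to linearise the two occurrences of $I+N$ using $N^{l}=0$. The binomial theorem gives $(I+N)^{q}=\sum_{k=0}^{l-1}C_{q}^{k}N^{k}$, every higher power of $N$ vanishing; likewise, interchanging the two summations, $\sum_{i=0}^{q-1}(I+N)^{i}=\sum_{k=0}^{l-1}\big(\sum_{i=0}^{q-1}C_{i}^{k}\big)N^{k}$, where the inner partial sum of binomial coefficients collapses to a single binomial coefficient by iterating Pascal's rule. Inserting both expansions into the expression obtained in the first step and gathering the coefficient of each $N^{k}$ produces the asserted identity; the hypothesis $n\ge l$ is used only to ensure that every power $N^{k}$ with $k\ge l$ that a priori appears has already disappeared, so that all sums may legitimately be truncated at $k=l-1$.

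I expect the combinatorial bookkeeping to be the only genuinely delicate point: one must keep the contribution of the $q$ full blocks of length $m$ --- which carry the $b^{*}$-terms --- carefully separated from that of the single partial block of length $p$ --- which carries the $b_{p}$-terms --- and one must evaluate the partial sum $\sum_{i=0}^{q-1}C_{i}^{k}$ correctly. Everything else is routine algebra once the iterate is organised along the lines above.
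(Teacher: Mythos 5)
Your method is in substance the same as the paper's: the paper also starts from the closed form $T^nx=A^nx+\sum_{j=0}^{n-1}A^jb$, splits the geometric sum $\sum_{j=0}^{n-1}A^j$ into $q$ blocks of length $m$ plus a tail of length $p$, and then expands $(I+N)^q$ and $\sum_{i=0}^{q-1}(I+N)^i$ using $N^l=0$ together with the hockey-stick identity; your factorization $T^n=(T^m)^q\circ T^p$ is only a tidier packaging of that block decomposition, and it buys nothing new. The one point you must be careful about is your final claim that gathering the coefficients of $N^k$ ``produces the asserted identity.'' It does not, quite: carried out correctly, your computation yields
$$
T^nx=\sum_{k=0}^{l-1}\Bigl(C_q^kN^kA^px+C_{q}^{k+1}N^kb^*+C_q^kN^k\widetilde b_p\Bigr),
\qquad \widetilde b_p=\sum_{k=0}^{p-1}A^kb,
$$
because $\sum_{i=0}^{q-1}C_i^k=C_q^{k+1}$ (not $C_{q+1}^{k+1}$) and $T^py=A^py+\sum_{k=0}^{p-1}A^kb$ (not $\sum_{k=1}^{p}A^kb$). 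The displayed formula in the Lemma contains off-by-one slips --- in the paper's own proof the block decomposition of $\sum_{j=0}^{n-1}A^j$ actually enumerates the exponents $1,\dots,n$, and an upper summation limit silently changes from $q-1$ to $q$ when the sums are interchanged --- and one can see the printed statement cannot be right by testing $m=p=1$, $n=q+1$, where it returns $A^nx+\sum_{j=0}^{n}A^jb$ instead of $A^nx+\sum_{j=0}^{n-1}A^jb$. So do not contort your bookkeeping to reproduce the printed coefficients: your version is the correct one, and the discrepancy is harmless downstream, since Lemma \ref{lemma2} and Proposition \ref{keylemma} only use that the coefficient of each $N^k$ is a polynomial in $q$ of degree at most $k+1$ (degree at most $k$ for the terms involving $x$) with the appropriate rationality when $b$ has finite order.
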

\begin{proof} By the definition of $T$, we have 
\begin{equation}\label{(1)}
T^nx=A^nx+\sum_{j=0}^{n-1}A^jb .
\end{equation}
Let us write 
$$
	\sum_{j=0}^{n-1}A^j
	=\sum_{j=0}^{q-1}\sum_{k=1}^{m}A^{mj+k}+\sum_{k=1}^{p} A^{qm+k}.
$$	
Using the fact $A^m=I+N$, we get
\begin{equation}\label{(2)}
	\sum_{j=0}^{n-1}A^j=\sum_{j=0}^{q-1}(I+N)^j \sum_{k=0}^{m-1}A^k+(I+N)^q\sum_{k=1}^{p}A^k .
\end{equation}	
Recall that $N^l=0$. Now using the interchange of summations, we have
\begin{align*}
	\sum_{j=0}^{q-1}(I+N)^j
	=\sum_{j=0}^{q-1}\sum_{k=0}^{j}C_j^kN^k
	=\sum_{k=0}^{l-1}\sum_{j=k}^{q}C_j^kN^k=\sum_{k=0}^{l-1}N^k\sum_{j=k}^{q}C_j^k.
\end{align*}
By the identity $\sum_{j=k}^{n}C_j^k=C_{n+1}^{k+1}$, we have 
\begin{equation}\label{(3)}
\sum_{j=0}^{q-1}(I+N)^j=\sum_{k=0}^{l-1}C_{q+1}^{k+1}N^k.
\end{equation}

Combining (\ref{(1)}), (\ref{(2)}), (\ref{(3)}) and the equation $(I+N)^q=\sum_{k=0}^{l-1}C^{k}_q N^k$ gives
\begin{align*}
	T^nx
	&=\sum_{k=0}^{l-1}C_q^kN^kA^px+\sum_{k=0}^{l-1}C_{q+1}^{k+1}N^k{b^*}+\sum_{k=0}^{l-1}C_q^kN^kb_p\\
	&=\sum_{k=0}^{l-1}(C_q^kN^kA^px+C_{q+1}^{k+1}N^k{b^*}+C_q^kN^kb_p).
\end{align*}

\end{proof}

Lemma \ref{lemma1} allows us to see that the realization $(\phi(T^n x))$ for a group character $\phi \in \widehat{X}$ is related to exponentials of some real polynomials, where $\widehat{X}$
is the dual group of $X$. 

\begin{lem}\label{lemma2}
	We keep the same assumption as in Lemma  \ref{lemma1}. Let $\phi \in \widehat{X}$, $x\in X$ and $n\ge l$. If we write $n=qm+p$ with $1\le p\le m$ and $q\in \mathbb{N}$, we have
	$$
	\phi(T^nx)=e^{2\pi i P(q)}
	$$
	where $P\in \mathbb{R}_{l}[t]$ is a polynomial depending on $p$. Moreover, if $b$ is of finite order, then $P(t)=P_1(t)+P_2(t)$ with $P_1\in \mathbb{R}_{l-1}[t]$ and $P_2\in \mathbb{Q}[t]$.
\end{lem}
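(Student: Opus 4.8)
The plan is to apply the character $\phi$ directly to the explicit formula for $T^n x$ furnished by Lemma \ref{lemma1}, exploiting that $\phi$ is a group homomorphism into the circle. Writing $n = qm + p$ with $1 \le p \le m$ fixed, Lemma \ref{lemma1} gives $T^n x$ as a sum indexed by $0 \le k \le l-1$ whose coefficients are the binomial coefficients $C_q^k$ and $C_{q+1}^{k+1}$, each of which is a polynomial in $q$ of degree $k$, hence of degree at most $l-1$. Applying $\phi$ turns the group-sum into a product of terms of the form $\phi(N^k A^p x)^{C_q^k}$, $\phi(N^k b^*)^{C_{q+1}^{k+1}}$, $\phi(N^k b_p)^{C_q^k}$. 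Since for any fixed group element $g \in X$ we have $\phi(g) = e^{2\pi i \theta_g}$ for some $\theta_g \in \mathbb{R}$, and since $\phi(mg) = \phi(g)^m$, each such factor equals $e^{2\pi i \theta \cdot (\text{polynomial in } q)}$; multiplying them together gives $\phi(T^n x) = e^{2\pi i P(q)}$ for a real polynomial $P$ whose degree is at most $\max_k \deg(C_q^{k+1}_{q+1}) = l$ (the term $k = l-1$ contributing $C_{q+1}^l$, of degree $l$). This proves the first assertion, and the dependence on $p$ (but not on $q$) is visible from the formula since $A^p x$ and $b_p$ involve $p$.

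For the moreover part, suppose $b$ has finite order, say order $M$. Then every element $N^k b^*$ and $N^k b_p$ built from $b$ also has order dividing $M$, so $\phi(N^k b^*)$ and $\phi(N^k b_p)$ are $M$-th roots of unity; writing them as $e^{2\pi i r_k}$ with $r_k \in \mathbb{Q}$ (indeed $r_k \in \tfrac1M \mathbb{Z}$), the corresponding contributions to $P$ are rational polynomials. These are precisely the terms $C_{q+1}^{k+1} r_k$ and $C_q^k r_k'$ with rational $r_k, r_k'$; collecting them gives a polynomial $P_2 \in \mathbb{Q}[t]$, which by inspection can have degree up to $l$ from the $C_{q+1}^l$ term — but that term comes from $N^{l-1} b^*$, which is of finite order, so it lands in $P_2$. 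The remaining contributions come only from the $\phi(N^k A^p x)^{C_q^k}$ factors, whose exponents are $C_q^k$ of degree $k \le l-1$; these assemble into $P_1 \in \mathbb{R}_{l-1}[t]$. Hence $P = P_1 + P_2$ as claimed. (One should double-check that the highest-degree term, coming from $k=l-1$ in the $b^*$-sum, is correctly attributed: its exponent is $C_{q+1}^l$, of degree exactly $l$, and its base $\phi(N^{l-1}b^*)$ is a root of unity, so this top-degree part is rational — consistent with $P_1 \in \mathbb{R}_{l-1}[t]$.)

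The main obstacle is bookkeeping rather than conceptual: one must be careful that the degree count is exactly right — the naive estimate from $C_q^k$ alone would suggest degree $l-1$, but the term $C_{q+1}^{k+1}$ for $k = l-1$ pushes the degree to $l$, and it is essential to notice that this top-order term is attached to $b^*$, which is built from $b$, so that under the finite-order hypothesis it is rational and does not spoil the claim $P_1 \in \mathbb{R}_{l-1}[t]$. A secondary point of care is the passage from $\phi(g)^{c}$ with $c = c(q)$ an integer-valued polynomial to $e^{2\pi i \theta_g c(q)}$: this is valid because $\phi(g)^{c} = e^{2\pi i \theta_g c}$ holds for every integer $c$, and $c(q) \in \mathbb{Z}$ for each $q \in \mathbb{N}$, so the identity of functions of $q$ is legitimate even though $\theta_g c(q)$ need not be the "principal" argument.
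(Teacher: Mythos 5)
Your proposal is correct and follows essentially the same route as the paper: apply $\phi$ to the formula of Lemma \ref{lemma1}, convert each factor $\phi(g)^{c(q)}$ into $e^{2\pi i \theta_g c(q)}$ with $c(q)$ a binomial-coefficient polynomial of degree at most $l$, and put the $b^*$- and $b_p$-contributions (which become rational under the finite-order hypothesis) into $P_2$ while the $\phi(N^kA^px)^{C_q^k}$ factors give $P_1\in\mathbb{R}_{l-1}[t]$. Your degree bookkeeping, including the observation that the degree-$l$ term $C_{q+1}^{l}$ is attached to $N^{l-1}b^*$ and hence lands in $P_2$, matches the paper's argument exactly.
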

\begin{proof}
	By Lemma \ref{lemma1}, we have
	\begin{align*}
		\phi(T^nx)
		&=\prod_{k=0}^{l-1}\phi(N^kA^px)^{C_q^k}\phi(N^k{b^*})^{C_{q+1}^{k+1}}\phi(N^kb_p)^{C_q^k}\\
		&=\prod_{k=0}^{l-1}e^{2\pi i (C_q^k \beta_{k, p}+ C_{q+1}^{k+1}\alpha_k  +C_q^k \theta_{k,p})}
		=e^{2\pi i (P_1(q)+P_2(q))},
	\end{align*}		
	where 
	$$P_1(q)=\sum_{k=0}^{l-1}C_q^k \beta_{k,p}, \quad P_2(q)=\sum_{k=0}^{l-1}(C_{q+1}^{k+1}\alpha_k + C_q^k\theta_{k,p})$$
	where $\beta_{k,p}$,  $\alpha_k$ and $\theta_{k,p}$ are arguments such that 
	$$ e^{2\pi i \beta_{k,p}}=\phi(N^kA^px),  \quad e^{2\pi i\alpha_k}=\phi(N^k{b^*}), \quad e^{2\pi i  \theta_{k,p}}=\phi(N^kb_p).$$ 
	The polynomial $P_1$ has degree at most $l-1$ and the polynomial $P_2$ has degree at most $l$. 
	
	If $b$ is of finite order so are $b^*$ and $b_p$ for all $1\le p\le m$. Since $N$ is homomorphic, 
	$N^kb^*$ and $N^kb_p$ are all of finite order. This implies that $\alpha_k$ and $\theta_{k,p}$ are rationals so that $P_2\in \mathbb{Q}[t]$. 
\end{proof}


Now we are ready to state and prove the following result on disjointness. 

\begin{prop}\label{keylemma}
	
	 Let $T$ be an affine map on a compact abelian group $X$ defined by $Tx=Ax+b$. 
	Suppose that  $A$ is quasi-unipotent of type $(m,l)$ and that  $(c_n)$ is an oscillating sequence of order $d$. Then
	 $(c_n)$ is linearly disjoint from the dynamical system $(X, T)$ if one of the following conditions is satisfied
	\begin{itemize}
		\item[(1)] $d\ge l,$
		\item[(2)]$d\ge l-1$ and $b$ is of finite order. 
	\end{itemize}
\end{prop}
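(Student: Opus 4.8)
The plan is to reduce the disjointness statement for a general continuous function on $X$ to the case of a single character, and then apply the polynomial description of realizations furnished by Lemma~\ref{lemma2} together with the characterization of oscillating sequences in Proposition~\ref{rational}. First I would fix $x\in X$ and $f\in C(X)$, and recall that by the Stone--Weierstrass theorem the linear span of the characters $\widehat{X}$ is dense in $C(X)$; since the ergodic averages in \eqref{ort} are bounded (by $\sup_n|c_n|\cdot\|f\|_\infty$ when $(c_n)$ is bounded, and in general one notes that an oscillating sequence of order $d$ has Cesàro-bounded modulus, or one works directly with characters whose realizations have modulus $1$), it suffices to establish \eqref{ort} for $f=\phi$ a character. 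So the whole matter comes down to showing
$$
\lim_{N\to\infty}\frac{1}{N}\sum_{n=0}^{N-1}c_n\,\phi(T^nx)=0.
$$

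Next I would invoke Lemma~\ref{lemma2}: for $n\ge l$, writing $n=qm+p$ with $1\le p\le m$, we have $\phi(T^nx)=e^{2\pi i P_p(q)}$ where $P_p\in\mathbb{R}_l[t]$ (and $P_p=P_{1,p}+P_{2,p}$ with $P_{1,p}\in\mathbb{R}_{l-1}[t]$, $P_{2,p}\in\mathbb{Q}[t]$ when $b$ has finite order). The key reorganization is to split the sum $\sum_{n=0}^{N-1}$ according to the residue class $p$ of $n$ modulo $m$: the finitely many terms with $n<l$ contribute $o(N)$ and can be discarded, and the remaining sum becomes $\sum_{p=1}^{m}\sum_{q} c_{qm+p}\,e^{2\pi i P_p(q)}$. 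For each fixed $p$, the inner sum is an ergodic average of the arithmetic subsequence $(c_{qm+p})_q$ against $e^{2\pi i P_p(q)}$.

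Now I would handle the two cases. In case (1), $d\ge l$ means $P_p\in\mathbb{R}_l[t]\subseteq\mathbb{R}_d[t]$. By Lemma~\ref{arithmetic}, each arithmetic subsequence $(c_{qm+p})_q$ is again oscillating of order $d$, hence the inner sum tends to $0$ by the very definition of oscillation of order $d$. In case (2), $d\ge l-1$ and $b$ has finite order, so $P_p=P_{1,p}+P_{2,p}$ with $P_{1,p}\in\mathbb{R}_{l-1}[t]\subseteq\mathbb{R}_d[t]$ and $P_{2,p}\in\mathbb{Q}[t]$; again $(c_{qm+p})_q$ is oscillating of order $d$ by Lemma~\ref{arithmetic}, and now Proposition~\ref{rational} applied to this subsequence gives that the average against $e^{2\pi i(P_{1,p}(q)+P_{2,p}(q))}$ tends to $0$. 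Summing the $m$ (fixed, finite number of) vanishing contributions and dividing by $N$—noting each block has roughly $N/m$ terms—yields the claim.

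The main obstacle I anticipate is purely bookkeeping rather than conceptual: matching the counting so that $\frac1N\sum_{n=0}^{N-1}$ over the original index $n$ passes cleanly to $\frac1m\cdot\frac1{(N/m)}\sum_q$ over the block index $q$ for each residue $p$, since $n=qm+p$ means $q$ ranges up to roughly $N/m$ and the averaging normalization must be tracked (the factor $1/m$ is absorbed harmlessly since each of the $m$ limits is $0$). A secondary point to be careful about is the reduction from $C(X)$ to characters when $(c_n)$ is not assumed bounded; the cleanest route is to observe that it suffices to prove disjointness for the dense subalgebra of trigonometric polynomials and that the relevant averages are controlled because $\frac1N\sum_{n<N}|c_n|$ stays bounded—this follows from $(c_n)$ being oscillating of order $0$ in a suitable sense, or one simply restricts attention, as is standard in this literature, to the characters directly since establishing \eqref{ort} for all $f\in C(X)$ is equivalent to establishing it for all $\phi\in\widehat X$ on each orbit closure. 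I would state this reduction as a short preliminary remark and then carry out the three-step argument above.
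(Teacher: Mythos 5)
Your proposal is correct and follows essentially the same route as the paper: the paper's own proof is a one-line combination of the density of characters in $C(X)$ with Lemma \ref{arithmetic}, Proposition \ref{rational}, Lemma \ref{lemma1} and Lemma \ref{lemma2}, and your write-up simply supplies the residue-class-modulo-$m$ bookkeeping and the case split (definition of oscillation of order $d$ when $d\ge l$, Proposition \ref{rational} when $d\ge l-1$ and $b$ has finite order) that the paper leaves implicit. Your side remark that the reduction from $C(X)$ to characters needs Ces\`{a}ro-boundedness of $|c_n|$ is a legitimate caveat, but it applies equally to the paper's argument, so it does not distinguish your proof from theirs.
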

\begin{proof}
	Since the linear combinations of characters of $X$ are dense in $C(X)$, we have only to check (\ref{ort}) for $f=\phi \in \widehat{X}$.
	Therefore we can conclude by
	combining Lemma \ref{arithmetic}, Proposition \ref{rational}, Lemma \ref{lemma1} and Lemma \ref{lemma2}.
\end{proof}

 We finish this section by two formulas concerning  the powers of a unipotent map $N$ on $\mathbb{T}^d$ which is unipotent of order $d$. By the classification theorem of nilpotent matrices (\cite{bf}, p. 312), under a suitable basis, $N$ has the form
$$
N=
\begin{pmatrix} 
0 & 1 & 0 &\cdots & 0 \\
0 & 0 & 1 &\cdots & 0 \\
0 & 0 & 0 &\cdots & 0 \\
& &\cdots \cdots & &\\
0 & 0 & 0 &\cdots & 1 \\
0 & 0 & 0 &\cdots & 0 \\
\end{pmatrix}.
$$


For $n\ge d-1$, we have
\begin{equation}\label{(4)}
(I+N)^n=\sum_{k=0}^{d-1}C_n^kN^k=
\begin{pmatrix} 
C_n^0 & C_n^1 & C_n^2 &\cdots & C_n^{d-1} \\
0 & C_n^0 & C_n^1 &\cdots & C_n^{d-2} \\
0 & 0 & C_n^0 &\cdots & C_n^{d-3} \\
& &\cdots \cdots & &\\
0 & 0 & 0 &\cdots & C_n^0 \\
\end{pmatrix} .
\end{equation}
By the identity $\sum_{j=k}^{n}C_j^k=C^{k+1}_{n+1}$ and $N^d=0$, we get
$$
	\sum_{k=0}^{n-1}(I+N)^k
	=\sum_{k=0}^{n-1}\sum_{j=0}^{k}C_k^jN^j
	=\sum_{k=0}^{d-1}C_n^{k+1}N^k,
$$
that is to say,	
\begin{equation}\label{(5)}
\sum_{k=0}^{n-1}(I+N)^k=
	\begin{pmatrix} 
		C_n^1 & C_n^2 & C_n^3 &\cdots & C_n^d \\
		0 & C_n^1 & C_n^2 &\cdots & C_n^{d-1} \\
		0 & 0 & C_n^1 &\cdots & C_n^{d-2} \\
		& &\cdots \cdots & &\\
		0 & 0 & 0 &\cdots & C_n^1 \\
	\end{pmatrix} .
\end{equation}

%


These two formulas (\ref{(4)}) and (\ref{(5)}) will be used in the proof of Theorem \ref{main1}.

\section{Proof of Theorem \ref{main1}}\label{proofThm}

We first observe that the implications $(1)\Rightarrow (2)$ and $(3)\Rightarrow (4)$ are trivial. Let us denote by $(0)$  the assertion ``$(c_n)$ is oscillating of order $d$". 

\subsection{Proofs of $(0)\Rightarrow (1)$ and  $(0)\Rightarrow (3)$}   
The proofs of both implications share the same computation, which will give us an explicit expression for the orbit of an affine map of zero entropy. 
Let $T$ be an affine map on $\mathbb{T}^{d}\times F$ of zero entropy. Then $T$ has the form
\begin{equation}\label{T}
T(x,y)=(Ax+By+a,Cy+b)
\end{equation}
with $a\in \mathbb{T}^{d}$, $b\in F$, $A$ an automorphism on $\mathbb{T}^{d}$ of zero entropy, $C$ an automorphism of $F$ and $B$ a morphism from $F$ to $\mathbb{T}^{d}$. This was observed in \cite{ls}.

For any $j\in \mathbb{N}^*$, let $(x_j,y_j):=T^j(x,y)$.   It is easy to check by induction  that
$$
x_j=A^jx+B_jy+a_j, \quad y_j=C^jy+b_j,
$$
for some $a_j\in \mathbb{T}^{d}$, $b_j\in F$ 
and $B_j$ a morphism from $F$ to $\mathbb{T}^{d}$. More precisely, $a_j, b_j$ and $B_j$ satisfy the recursive relations
$$\left\{
\begin{aligned}
a_{j+1} & =  Aa_j+a+Bb_j \\
b_{j+1} & =  b_j+b \\
B_{j+1} & =  AB_j+BC^j.
\end{aligned}
\right.
$$
Let $q=\sharp F !\cdot d$ and $M=A^q-I$ where $\sharp F!$ denotes the factorial of $\sharp F$.  This choice of $q$ makes that\\
\indent (i)  $C^q=I$;\\
\indent (ii) $A^q$ quasi-unipotent of type $(1,d)$, i.e. $A^q = I + M$ with $M^d=0$.\\
In fact, since $C$ is a permutation of $F$, the order of $C$ divides $\sharp F!$. Then $C^{\sharp F!}=I$, a fortiori $C^q=I$. This is (i). Prove now (ii).
Since $A$ is quasi-unipotent of order $(d,d)$ (by Lemma \ref{lema0}), we can write $A^d = I + N$ with $N^d=0$.  Since $d$ divides $q$,  we can write 
$$
M:=A^q -I =(A^d -I) Q =NQ
$$
where $Q$ commute with $N$.   So, $M^d=0$. 

Since $C^q=I$, we have $y_q=y+b_q$. Let 
$$
F_y(x)=A^qx+(B_qy+a_q), \quad G(y)=y+b_q.
$$
Then 
$$
       T^q(x, y) = (F_y(x), G(y)).
$$
Note that $F_y$ is an affine map on $\mathbb{T}^d$ whose  automorphism  $A^q$ is quasi-unipotent  of type $(1,d)$,  and $G$ is
an affine map on $F$ whose automorphism is the identity which is  a quasi-unipotent  automorphism of type $(1,1)$. 
It can be checked, by induction on $n$,  that for  $n\ge 1$,
$$
T^{nq}(x, y) = (F_y^n(x)+H_n, G^n(y)),
$$
where $H_1=0$ and 
\begin{equation}\label{Hn}
H_n=\sum_{i=1}^{n-1}iA^{q(n-i-1)}B_qb_q \ \ \ (\forall n \ge 2)
\end{equation}
are independent of $x$ and $y$.
Then for any $n\ge 2$ and $0\le j\le q-1$, we have $T^{nq+j}(x,y)=T^{nq}(x_j,y_j)$ so that 
\begin{equation}\label{Tpower}
T^{nq+j}(x,y)=(F_{y_j}^n(x_j)+H_n, G^n(y_j)).
\end{equation}

Now let us prove $(0)\Rightarrow (1)$. Assume that $T$ is an automorphism of zero entropy on $\mathbb{T}^{d+1} \times F$. Then $T$ has the form (\ref{T}) with $a=0$, $b=0$ and $A$ a zero entropy automorphism
on $\mathbb{T}^{d+1}$. Hence $a_q=0$ and $b_q=0$ by the above recursive relation.  So  (\ref{Tpower}) takes the form
$$
         T^{nq+j}(x,y)=(F_{y_j}^n(x_j), G^n(y_j)), \ \ \ \mbox{\rm where} \ \  \  F_y(x) = A^q x + B_qy.
$$
The automorphism of $F_y$ is quasi-unipotent of type $(d+1,d+1)$ and the automorphism of $G$ is quasi-unipotent of type $(1,1)$. Therefore the affine map $(F_y(\cdot), G(\cdot))$
on $\mathbb{T}^{d+1}\times F$ is quasi-unipotent of type $(d+1, d+1)$. On the other hand, $B_qy$ is of finite order for any $y\in F$ because $F$ is finite and $B_q$ is a morphism. 
Thus  by Proposition \ref{keylemma} (2) and Lemma \ref{arithmetic}, for any continuous function $f$ on $\mathbb{T}^{d+1}\times F$, we have
\begin{eqnarray*}
   &&  \lim_{N\to \infty} \frac{1}{N} \sum_{n=0}^{N-1} c_{nq+j}f(T^{nq+j}(x,y)) \\
    &=&   \lim_{N\to \infty} \frac{1}{N} \sum_{n=0}^{N-1} c_{nq+j} f(F_{y_j}^n(x_j), G^n(y_j)) =0. 
\end{eqnarray*}
Taking average over $0\le j<q$, we finish the proof of 
 the implication $(0)\Rightarrow (1)$.

It remains to prove $(0)\Rightarrow (3)$. Assume that $T$ is an affine map of zero entropy on $\mathbb{T}^{d} \times F$. Then $T$ has the form (\ref{T}) and $T^{nq+j}$ takes the form (\ref{Tpower}). 
The automorphism of $G$ is quasi-unipotent of type $(1,1)$. The automorphism of $F_y$ is quasi-unipotent of type $(d,d)$ but the term $a_q$ may not be of finite order (this is the crucial difference from the implication $(0)\Rightarrow (1)$). 
We first show that $n \mapsto H_n$ can be expressed as a polynomial of $n$, namely  
 \begin{equation}\label{Hn2}
 H_n=\sum_{k=0}^{d-1} C_{n}^{k+2} M^k B_qb_q.
 \end{equation}
In fact, by the identity $\sum_{j=k}^{n-2}(n-j-1)C_j^k=C_{n}^{k+2}$ and the fact
$A^q = I + M$ with $M^d=0$, we have 
\begin{align*}
	\sum_{j=0}^{n-2}(n-j-1)A^{qj}
	&=\sum_{j=0}^{n-2}(n-j-1)(I+M)^{j}\\
	&=\sum_{j=0}^{n-2}\sum_{k=0}^{j}(n-j-1)C_j^kM^k\\
	&=\sum_{k=0}^{d-1}\sum_{j=k}^{n-2}(n-j-1)C_j^kM^k\\
	&=\sum_{k=0}^{d-1} C_{n}^{k+2} M^k.\\
\end{align*}
Thus (\ref{Hn2}) follows immediately from (\ref{Hn}). Since $F$ is finite and $B_q$ is a morphism,
 $M^k B_qb_q$ are of finite order for all $0\le k\le d-1.$ Take an arbitrary character $\phi=\phi_1\phi_2$ of $\mathbb{T}^{d} \times F$ where $\phi_1$  is a character of $\mathbb{T}^{d}$ and $\phi_2$ is a character of  $F$.
 Assume $\phi_1(x) = e^{2\pi i (\lambda_1x_1 + \cdots +\lambda_dx_d)}$ where $(\lambda_1, \cdots, \lambda_d)\in \mathbb{Z}^d$.
  By the above discussion, we have $\phi_1(H_n)=e^{2\pi i P(n)}$ where
   $$ P(t) = \sum_{k=0}^{d-1} C_{t}^{k+2} \alpha_k  \in\mathbb{Q}[t]$$
   with $\alpha_k\in \mathbb{Q}$ defined by $e^{2\pi i \alpha_k}=\phi_1(M^kB_qb_q).$
    Then, by (\ref{Tpower}), we have
  \begin{eqnarray*}
   &&  \lim_{N\to \infty} \frac{1}{N} \sum_{n=0}^{N-1} c_{nq+j}\phi(T^{nq+j}(x,y)) \\
    &=&   \lim_{N\to \infty} \frac{1}{N} \sum_{n=0}^{N-1} c_{nq+j}e^{2\pi i P(n)} \phi_1(F_{y_j}^n(x_j)) \phi_2( G^n(y_j)) =0. 
\end{eqnarray*}
The last equality follows from  Lemma \ref{arithmetic},  Proposition \ref{rational} and Proposition \ref{keylemma} (1) which is applied to the affine map
$(x, y) \mapsto (F_{y_j}(x), G(y))$.
By taking average over $0\le j<q$, we complete the proof of 
 the implication $(0)\Rightarrow (3)$.

\subsection{Proof of $(2)\Rightarrow (0)$}
Let $T$ be a special automorphism of zero entropy on $\mathbb{T}^{d+1}\times F$ where $F$ is a finite abelian group. It  has the form
$$
T(x,y)=(Ax+By,Cy)
$$
with $A=I+N$ an automorphism on $\mathbb{T}^{d+1}$ of zero entropy and $N$ of the nilpotent index $d+1$, $C$ an automorphism of $F$ and $B$ a morphism from $F$ to $\mathbb{T}^{d+1}$.  What we will prove  is that any polynomial exponential sequence $(e^{2\pi i P(n)})$ with $P\in \mathbb{R}_d[t]$ is an observable sequence of the system $(\mathbb{T}^{d+1}\times F, T)$.

 Observe that for any $n\in \mathbb{N}^*$
$$
T^{n}(x,0)=((I+N)^nx,0).
$$
Let $P\in \mathbb{R}_d[t]$. Since the binomial polynomials $C_t^0, C_t^1, \dots, C_t^d$ form a basis of $\mathbb{R}_d[t]$, there exists $x:=(x_0, x_1,\dots, x_d)\in \mathbb{R}^{d+1}$ such that
$$
P(t)= x_0C_t^0+x_1C_t^1+\cdots+x_dC_t^d.
$$
Let $G:\mathbb{T}^{d+1}\to \mathbb{T}$ be the projection on the first coordinate. By the formula (\ref{(4)}),  we have $$
e^{2 \pi iP(n)}=e^{2\pi iG(T^nx)}
$$
where $x$ also denotes the projection of $x \in \mathbb{R}^{d+1}$ onto $\mathbb{T}^{d+1}$. So, $(e^{2 \pi iP(n)})$ is an
 observable sequence of the system $(\mathbb{T}^{d+1}\times F,T)$. This completes the proof  $(2)\Rightarrow (0)$.

\subsection{Proof of $(4)\Rightarrow (0)$} 
Let $T_b$ be a special affine map of zero entropy on $\mathbb{T}^{d}\times F$ for a finite abelian group $F$ which has the form
$$
T_b(x,y)=(Ax+By+b,Cy)
$$
with $b\in \mathbb{T}^{d}$, $A=I+N$ an automorphism on $\mathbb{T}^{d}$ of zero entropy and $N$ of the nilpotent index $d$, $C$ an automorphism of $F$ and $B$ a morphism from $F$ to $\mathbb{T}^{d}$. 
We will prove that  any polynomial exponential sequence $(e^{2\pi i P(n)})$ with $P\in \mathbb{R}_d[t]$ is an observable sequence of the system $(\mathbb{T}^{d}\times F,T_b)$ along the orbit of 
$(0, 0) \in \mathbb{T}^{d}\times F$.

Observe that 
$$
T_b^n(0,0)=\left(\sum_{k=0}^{n-1}(I+N)^kb,0\right).
$$
Let $Q\in \mathbb{R}_{d-1}[t]$. Since $1, C_t^1, \dots, C_t^d$ form a basis of $\mathbb{R}_d[t]$, there exists $b=(b_1, b_2,\dots, b_d)\in \mathbb{R}^{d}$ and $b_0\in \mathbb{R}$ such that
$$
t Q(t)=b_0+b_1C_t^1+\cdots+b_dC_t^d.
$$
Let $G:\mathbb{T}^{d}\times F\to \mathbb{T}$ be the projection on the first coordinate of $\mathbb{T}^{d}$. By the formula (\ref{(5)}),  we get 
$$
e^{2 \pi inQ(n)}=e^{2\pi i(G(T_b^n(0,0))+b_0)}
$$
which is an observable sequence of the system $(\mathbb{T}^{d}\times F,T_b)$. By Lemma \ref{lem00000}, this completes the proof  $(4)\Rightarrow (0)$.

\section{Acknowledgment}
The authors are grateful to the anonymous reviewers for their valuable remarks.

\end{document}